\newtheorem{conjecture}{Conjecture}
\newtheorem{theorem}{Theorem}
\newtheorem{lemma}[theorem]{Lemma}
\newcommand{\sst}[2]{\left\{#1\,:\,#2\right\}}
\newcommand{\R}{\mathbf{R}}
\renewcommand{\le}{\leqslant}
\renewcommand{\ge}{\geqslant}
\title{Fractional coloring of triangle-free planar graphs\thanks{This research was supported by the Czech-French Laboratory LEA STRUCO.}}
\author{Zdeněk Dvořák\thanks{Computer Science Institute of Charles University, Prague, Czech Republic.
E-mail: \texttt{rakdver@iuuk.mff.cuni.cz}.
Supported by the Center of Excellence -- Inst. for Theor. Comp. Sci., Prague, project P202/12/G061 of Czech Science Foundation.}
\and
Jean-Sébastien Sereni\thanks{\emph{Centre National de la Recherche
Scientifique} (LORIA), Nancy, France. E-mail: \texttt{sereni@kam.mff.cuni.cz}.
This author's work was partially
supported by the French \emph{Agence Nationale de la Recherche} under reference
\textsc{anr 10 jcjc 0204 01}.}
\and
Jan Volec\thanks{Mathematics Institute and DIMAP, University of Warwick, Coventry CV4 7AL, UK.
E-mail: \texttt{honza@ucw.cz}. This author's work was supported by a grant of the French Government.}}
\date{}
\begin{document}
\maketitle

\begin{abstract}
We prove that every planar triangle-free graph on $n$ vertices has fractional chromatic number at most $3-\frac{1}{n+1/3}$.
\end{abstract}

\section{Introduction} 
\label{sec:intro}
Coloring of triangle-free planar graphs is an attractive topic.  It started
with Grötzsch's theorem~\cite{grotzsch1959}, stating that such graphs are
$3$-colorable. Since then, several simpler proofs have been given, e.g.,~by
Thomassen~\cite{thom-torus,ThoShortlist}. Algorithmic questions have also been addressed: while most proofs
readily yield quadratic algorithms to $3$-color such graphs, it takes
considerably more effort to obtain asymptotically faster algorithms. Kowalik~\cite{Kow3col}
proposed an algorithm running in time $O(n\log n)$, which relies on the design of an
advanced data structure. More recently, Dvořák \emph{et al.}~\cite{DvoKawTho}
managed to obtain a linear-time algorithm, yielding at the same time a yet
simpler proof of Grötzsch's theorem.

The fact that all triangle-free planar graphs admit a $3$-coloring implies
that all such graphs have an independent set containing at least one third of
the vertices. Albertson \emph{et al.}~\cite{AlbBolTuc} had conjectured that there is always a
larger independent set, which was confirmed by Steinberg and
Tovey~\cite{SteinbergTovey1993} even in a stronger sense:
all triangle-free planar $n$-vertex graphs admit a $3$-coloring where not all color classes
have the same size, and thus at least one of them forms an independent set of size
at least $\frac{n+1}{3}$.
This bound turns out to be tight for infinitely many triangle-free
graphs, as Jones~\cite{Jones1985} showed. As an aside, let us mention that
the graphs built by Jones have maximum degree $4$: this is no
coincidence as Heckman and Thomas later established that all triangle-free
planar $n$-vertex graphs with maximum degree at most $3$ have an independent
set of order at least $\frac{3n}{8}$, which again is a tight bound---actually
attained by planar graphs of girth $5$.

All these considerations naturally lead to investigate the fractional
chromatic number of triangle-free planar graphs. Indeed, as we shall later
see, this invariant actually corresponds to a weighted version of the
independence ratio. In addition, since $\chi_f(G)\le\chi(G)$ for every graph
$G$, Grötzsch's theorem implies that $\chi_f(G)\le3$ whenever $G$ is
triangle-free and planar.  On the other hand, Jones's construction shows the
existence of triangle-free planar graphs with fractional chromatic number
arbitrarily close to $3$.  Thus one wonders whether there exists a
triangle-free planar graph with fractional chromatic number exactly $3$. Let
us note that this happens for the circular chromatic number $\chi_c$, which is
a different relaxation of ordinary chromatic number such that $\chi_f(G)\le
\chi_c(G)\le \chi(G)$ for every graph $G$.

The purpose of this work is to answer this question. We do so by establishing
the following upper bound on the fractional chromatic number of
triangle-free planar $n$-vertex graphs, which depends on $n$.
\begin{theorem}\label{thm-main}
Every planar triangle-free graph on $n$ vertices has fractional chromatic
number at most $3-\frac{1}{n+1/3}$.
\end{theorem}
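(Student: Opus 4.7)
My plan is to apply linear programming duality to translate the bound on $\chi_f(G)$ into a weighted-independence statement, and then to establish the latter inductively via a discharging argument. Recall that $\chi_f(G) \le k$ holds if and only if, for every non-negative weighting $w$ on $V(G)$ with total weight $W = \sum_v w(v)$, some independent set $I \subseteq V(G)$ satisfies $w(I) \ge W/k$. Plugging in $k = 3-\frac{1}{n+1/3} = \frac{9n}{3n+1}$, Theorem~\ref{thm-main} becomes
\[
\alpha_w(G) \ge \frac{(3n+1)W}{9n}
\quad\text{for every weighting $w$ on a triangle-free planar $G$ with $n$ vertices,}
\]
and by scaling and continuity we may assume $w$ is integer-valued. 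Under $w \equiv 1$ this reads $\alpha(G) \ge n/3 + 1/9$, strictly weaker than Steinberg--Tovey's $\alpha(G) \ge (n+1)/3$, so the content of the theorem is really a weighted extension of Steinberg--Tovey.

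I would prove the weighted inequality by strong induction on a parameter such as $n + W$, working with a hypothetical minimum counterexample $(G, w)$. A first clean-up is that any vertex $v$ with $w(v) = 0$ may be deleted: decreasing $n$ while keeping $W$ fixed strictly increases the target ratio $(3n+1)/9n$, so the inductive bound on $G - v$ already furnishes a suitable independent set in $G$. Further reducible configurations to identify include twin vertices (mergeable by weight transfer), vertices of degree at most one (handled by an include/exclude dichotomy), and short separating cycles (allowing the instance to be split into strictly smaller pieces glued by an exchange lemma). The aim is a list of configurations whose exclusion forces any counterexample $(G,w)$ into a very rigid shape.

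One then shows via a discharging argument that no triangle-free planar graph fits that rigid shape. Euler's formula together with girth at least four yields $|E(G)| \le 2n-4$, so one can endow vertices and faces with initial charges summing to a negative constant, redistribute them by local rules tailored to the excluded configurations, and verify that every vertex and face ends with non-negative charge, a contradiction. The main difficulty is the joint calibration of the reducible configurations and the discharging rules: each reduction must be tight enough to preserve the sharp ratio $(3n+1)/9n$---rather than the weaker $(n+1)/3n$ that comes for free from Steinberg--Tovey---while the list must be rich enough for discharging to exclude every triangle-free planar graph. The appearance of the fraction $n+1/3$, rather than the more familiar $n+1$, suggests that the extremal obstructions are weighted variants of Jones's construction, and the delicate weighted accounting around those obstructions is what I expect to be the hardest part.
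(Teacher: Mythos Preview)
Your reformulation via LP duality is correct, and a minimum-counterexample setup is indeed the spine of the paper's proof. But the concrete content of your proposal---the list of reducible configurations and the discharging step---is where the real work lies, and what you sketch would not suffice. Twins, vertices of degree at most one, and separating cycles are far too coarse; after such reductions you are still left with a $2$-connected graph of minimum degree at least two and no control over the weight function, and ``discharging'' at that level of generality is a promise rather than an argument. In particular nothing in your plan explains why the threshold is exactly $b(n)=1/3+1/(9n)$ rather than $1/3+c/n$ for some other constant $c$.

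The paper supplies three specific ideas you are missing. First, via Lemma~\ref{lemma-tovey} (a $3$-coloring in which all neighbors of a prescribed vertex of degree $\le 4$ get one color) it shows that in any witness $w$ for a counterexample, every vertex $v$ of degree $\le 3$ satisfies $w(v)<\tfrac{3\varepsilon}{n}w(V(G))$ with $\varepsilon=1/9$; this is the mechanism that bounds the weight lost when low-degree vertices are deleted in a reduction. Second, the folding lemma (Lemma~\ref{lemma-folding}) forces every face of a minimal counterexample to have length exactly five, and a separate reduction pushes the minimum degree to three; the latter step uses the inequality $\varepsilon\ge 9\varepsilon^2$ at equality, which is what fixes $\varepsilon=1/9$ and hence the constant $n+1/3$ in the statement. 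Third, instead of discharging from scratch, the paper invokes the \emph{safe $5$-face} lemma of Dvo\v{r}\'ak--Kawarabayashi--Thomas (Lemma~\ref{lemma-safe}) as a black box, and then gives an explicit $(f,N)$-coloring extension across such a face after identifying two pairs of vertices in $G-\{v_1,v_2,v_3,v_4\}$. None of these ingredients appears in your plan, and without something playing their role the argument does not close.
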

Consequently, no (finite) triangle-free planar graph has fractional
chromatic number equal to $3$.
We also note that the bound provided by Theorem~\ref{thm-main} is tight up to
the multiplicative factor.
Indeed, the aforementioned construction of Jones~\cite{Jones1985} yields,
for each $n\ge 2$ such that $n\equiv 2\pmod 3$, a triangle-free planar graph $G_n$
with $\alpha(G_n)=\frac{n+1}{3}$.
Consequently, $\chi_f(G_n)\ge \frac{3n}{n+1}=3-\frac{3}{n+1}$.

Our result can be improved for triangle-free planar graphs with maximum degree at most four,
giving an exact bound for such graphs.
\begin{theorem}\label{thm-main4}
Every planar triangle-free $n$-vertex graph of maximum degree at most four
has fractional chromatic number at most $\frac{3n}{n+1}$.
\end{theorem}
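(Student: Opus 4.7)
The goal is to exhibit a fractional coloring of $G$ of total weight at most $\frac{3n}{n+1}$. By LP duality, this is equivalent to proving the following \emph{weighted} form of Steinberg--Tovey's bound: for every nonnegative rational weighting $w\colon V(G)\to\mathbf{Q}_{\ge 0}$, there is an independent set $I$ of $G$ with $w(I)\ge \frac{n+1}{3n}\cdot w(V(G))$. When $w\equiv 1$ this is exactly Steinberg--Tovey. The obvious reduction, namely applying Steinberg--Tovey to the vertex blow-up $G[w]$, fails because blow-ups of triangle-free planar graphs are no longer planar in general, so the weighted statement is genuinely stronger than an iteration of the unweighted one.

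\textbf{Execution.} I would proceed by induction on $|V(G)|$ combined with a structural analysis. Using Euler's formula together with the triangle-free and max-degree-$4$ assumptions, one locates an unavoidable local configuration --- for instance a vertex of degree at most $3$, a $4$-face or $5$-face incident only to low-degree vertices, or a more intricate substructure inspired by Jones's extremal graphs. For each such configuration, one removes or identifies a bounded number of vertices to obtain a smaller triangle-free planar graph $G'$ of maximum degree at most $4$; applies induction to produce a fractional coloring of $G'$ of weight at most $\frac{3(n-k)}{n-k+1}$; and extends it to $G$ by adjoining one or more explicit independent sets covering the removed part, arranging the added weight to be at most $\frac{3n}{n+1}-\frac{3(n-k)}{n-k+1}$. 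The choice of reducible configurations is guided by a discharging argument, and the max-degree-$4$ hypothesis enters by bounding interactions between faces.

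\textbf{Main obstacle.} The bound $\frac{3n}{n+1}$ is tight, attained by Jones's construction for infinitely many $n$, so the induction carries no numerical slack: each reducible configuration must match the budget exactly. I expect the heart of the proof to be a careful discharging step identifying the right list of unavoidable configurations, paired with an explicit extension scheme matching $\frac{3n}{n+1}-\frac{3(n-k)}{n-k+1}$ precisely. Handling the situations where $G$ locally mimics Jones's construction appears especially delicate, since there the local slack vanishes and the reduction must exploit global planarity rather than purely local information; this is where I anticipate the bulk of the casework.
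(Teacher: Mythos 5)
Your plan is not a proof: you correctly reformulate the target as a weighted independence statement via LP duality and correctly observe that the naive blow-up reduction breaks planarity, but everything that follows (``locate an unavoidable configuration,'' ``guided by a discharging argument,'' ``the bulk of the casework'') is a sketch of a program, not an argument. You even flag the central difficulty yourself --- zero numerical slack at Jones's extremal graphs --- without explaining how the induction could survive it. As written, there is no route from your outline to the theorem.

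The idea you are missing is that the maximum-degree-$4$ hypothesis lets you bypass discharging entirely. The relevant tool is the strengthening of Gr\"otzsch's theorem due to Steinberg--Tovey (degree $\le 3$) and Dvo\v{r}\'ak--Lidick\'y (degree $\le 4$), stated here as Lemma~\ref{lemma-tovey}: for any vertex $v$ of degree at most four in a triangle-free planar graph $G$, there is a proper $3$-coloring of $G$ in which all neighbors of $v$ receive the same color. From such a $3$-coloring one immediately gets a fractional coloring assigning measure $2/3$ to $v$ and $1/3$ to every other vertex (take the three color classes, and note $v$ can sit in either of the two classes avoiding its neighborhood). By the dual characterization (Lemma~\ref{lemma-eqw}), this yields, for every weight function $w$, an independent set $X$ with $w(X)\ge\frac{w(V(G))+w(v)}{3}$; this is Lemma~\ref{lemma-wtone}. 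Now the proof of Theorem~\ref{thm-main4} is one line: given $w$, choose $v$ of maximum weight, so $w(v)\ge w(V(G))/n$, hence $w(X)\ge\frac{n+1}{3n}w(V(G))$, and Lemma~\ref{lemma-eqw} gives $\chi_f(G)\le\frac{3n}{n+1}$. In other words, the ``weighted Steinberg--Tovey'' you want is obtained not by blowing up or by induction, but by applying the monochromatic-neighborhood theorem to a single well-chosen vertex; the degree bound is exactly what makes that theorem available at every vertex.
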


Furthermore, the graphs of Jones's construction contain a large number of separating 4-cycles
(actually, all their faces have length five).  We show that planar triangle-free graphs
of \emph{maximum degree $4$} and \emph{without separating $4$-cycles} cannot have fractional
number arbitrarily close to $3$.

\begin{theorem}\label{thm-main4nos}
There exists $\delta>0$ such that every planar triangle-free graph of maximum degree at most four
and without separating $4$-cycles has fractional chromatic number at most $3-\delta$.
\end{theorem}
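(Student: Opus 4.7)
The plan is to argue by contradiction via the discharging method, in the spirit of the proofs of Theorems~\ref{thm-main} and~\ref{thm-main4}. For a small constant $\delta>0$ to be determined, suppose $G$ is a planar triangle-free graph of maximum degree at most four, without separating $4$-cycles, that satisfies $\chi_f(G)>3-\delta$, and that is chosen minimum in number of vertices among such graphs. The first step is to identify a list of \emph{reducible configurations}, i.e.\ subgraphs $H$ whose presence in $G$ would contradict minimality: one would remove (or suitably contract) $H$, fractionally $(3-\delta)$-color the remainder using minimality, and extend the coloring across $H$. Natural candidates include vertices of degree at most two, short paths joining low-degree vertices, and $4$-faces incident to low-degree vertices. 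Reducibility for $\chi_f$ is more subtle than for $\chi$ because the extension must produce, for each interior vertex, a probability distribution over colors compatible with those already assigned to the boundary; this is established by solving, once and for all, a finite family of small linear programs parametrized by the possible boundary patterns.

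The hypothesis that no $4$-cycle separates $G$ enters the analysis in two ways. Combinatorially, it forces $4$-faces to be well separated: two $4$-faces sharing an edge, or joined by a very short sequence of $4$-faces, would combine with the degree-$4$ cap to produce a separating $4$-cycle, so $4$-faces must be globally spread out and typically surrounded by faces of length at least five. Structurally, it rules out the nesting of $4$-faces occurring in Jones's construction and in similar extremal examples, where separating $4$-cycles are exactly the obstruction to improving the bound further. Assign to each vertex $v$ the charge $d(v)-4$ and to each face $f$ the charge $|f|-4$, so that by Euler's formula the total charge is $-8$. Our hypotheses make every vertex charge non-positive and every face charge non-negative, with value $0$ on $4$-faces and at least $1$ on faces of length at least five. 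The goal of the discharging step is to design redistribution rules that, using the reducible configurations together with the scarcity of $4$-faces, make every final charge non-negative and some strictly positive, thereby contradicting the total being $-8$.

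The main obstacle is establishing reducibility with a \emph{uniform quantitative margin}~$\delta$. Unlike the integral case, where an extension either exists or does not, here each extension lemma must guarantee a feasible solution whose objective leaves at least a fixed slack, so that iterating reductions does not erode the quality of the coloring. Concretely, for every boundary pattern of every configuration under consideration, one has to show that even after an adversary fixes the boundary distributions, the interior vertices can still be served with total weight at least $1/(3-\delta)$. The calibration is delicate because $\delta$ must be small enough that all such linear programs remain feasible simultaneously, yet large enough that the discharging argument yields a strict deficit. Choosing $\delta$ to meet both requirements, and identifying enough reducible configurations to cover all local patterns permitted by the structural hypotheses, is where the bulk of the technical work will lie.
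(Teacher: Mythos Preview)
Your proposal is a research programme, not a proof. You outline a discharging strategy, explicitly defer the essential content---the list of reducible configurations, the fractional reducibility lemmas with uniform slack~$\delta$, and the discharging rules---and acknowledge that ``the bulk of the technical work will lie'' there. Nothing concrete is supplied: no configuration is named, no reducibility is established, no rule is given. Moreover, your structural assertion that the absence of separating $4$-cycles forces $4$-faces to be well separated is false as stated: the $3$-cube $Q_3$ is planar, triangle-free, of maximum degree three, has every face a $4$-face (so adjacent $4$-faces abound), and yet contains no separating $4$-cycle. So the mechanism you rely on to feed the discharging is not available.

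The paper's argument is entirely different and much shorter. It quotes Lemma~\ref{lemma-far}: there is a constant $D$ such that in any triangle-free planar graph without separating $4$-cycles, for any set $X$ of vertices of degree at most four at pairwise distance at least $D$, there is a $3$-coloring in which each vertex of $X$ sees only one color among its neighbours. Since $\Delta(G)\le 4$, the $(D{-}1)$-th power of $G$ has maximum degree below $4^D$ and can be properly colored with $4^D$ colors, partitioning $V(G)$ into classes $C_1,\ldots,C_{4^D}$ each of which is $D$-spread. Applying Lemma~\ref{lemma-far} to $C_i$ shows that $G$ has an $f_i$-coloring with $f_i\equiv 2/3$ on $C_i$ and $1/3$ elsewhere. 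For any weight $w$, some $C_i$ carries at least a $4^{-D}$ fraction of the total weight, and Lemma~\ref{lemma-eqw} then yields an independent set of weight at least $\bigl(\tfrac13+\tfrac{1}{3\cdot 4^D}\bigr)w(V(G))$, giving $\delta=3/(4^D+1)$. The hypothesis on separating $4$-cycles is used only to invoke Lemma~\ref{lemma-far}; no discharging or reducibility analysis is needed.
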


Dvořák and Mnich~\cite{dmnich} proved that there exists $\beta>0$ such that all planar triangle-free
$n$-vertex graphs without separating $4$-cycles contain an independent set of size at least $n/(3-\beta)$.
This gives an evidence that the restriction on the maximum degree in Theorem~\ref{thm-main4nos} might not
be necessary.

\begin{conjecture}\label{conj-h}
There exists $\delta>0$ such that every planar triangle-free graph without separating $4$-cycles has
fractional chromatic number at most $3-\delta$.
\end{conjecture}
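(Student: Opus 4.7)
\emph{Plan.} I would attack Conjecture~\ref{conj-h} through the LP-duality characterisation of the fractional chromatic number: $\chi_f(G)\le 3-\delta$ holds iff, for every weight $w\colon V(G)\to \mathbf{R}_{\ge 0}$, there exists an independent set $I$ of $G$ with $w(I)\ge w(V(G))/(3-\delta)$. The Dvořák--Mnich result \cite{dmnich} establishes the case of the \emph{uniform} weight; the core task is therefore to upgrade their independent-set machinery to arbitrary non-negative weights, with a \emph{uniform} loss from $3$.

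The first step would be to revisit the proof of \cite{dmnich} and isolate its structural engine. Such arguments typically proceed via a discharging scheme that, in every planar triangle-free graph without separating $4$-cycles, exhibits either a bounded family of reducible configurations (a low-degree vertex adjacent to a prescribed pattern) or a global inequality with a constant slack. The greedy/residual independent-set procedure that extracts the set $I$ in the unweighted case can often be run on a weighted input provided the ``gain'' per reduction step is measured in terms of $w$ rather than cardinality. Concretely, I would aim for a lemma of the form: for every weighting $w$, either $G$ contains one of finitely many reducible configurations where a local weighted exchange gives gain at least $\varepsilon\cdot w(\cdot)$, or a discharging-style average over $G$ already beats the ratio $3-\delta$.

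For the overall induction I would combine this with Theorem~\ref{thm-main} to dispense with small base cases for free, and use Euler's formula together with triangle-freeness to bound the number of vertices of degree at least some large constant $\Delta_0$. High-degree vertices are the main difficulty, because deleting such a $v$ together with its neighbourhood can destroy a large amount of weight without a comparable drop in the number of vertices to cover. A natural workaround is a branching lemma: either include $v$ in $I$ (gaining $w(v)$ while losing $w(N(v))$) or exclude $v$ and recurse on $G-v$, choosing the better alternative; careful bookkeeping, tuned to the constant $\beta$ produced by \cite{dmnich}, should yield a $\delta$ of the same order as $\beta$.

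The main obstacle, in my view, is precisely this weighted extension. Arguments in the spirit of \cite{dmnich} tend to be tailored to producing a \emph{single} independent set and rely on global counting that is insensitive to how weight is distributed. A pathological weighting---concentrating almost all weight on a sparse subgraph whose local structure is extremal, for instance long induced paths through degree-$3$ vertices separated by high-degree hubs---could force $I$ to be globally spread out in ways that the discharging argument cannot see. Circumventing this likely requires either a new reducible-configuration lemma that is robust to adversarial weights, or a discharging scheme whose charges incorporate the weight function itself, so that the final inequality is linear in $w$.
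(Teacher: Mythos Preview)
The statement you are attempting to prove is Conjecture~\ref{conj-h}, which the paper explicitly leaves open: there is no proof in the paper to compare your attempt against. The paper only proves the special case with maximum degree at most four (Theorem~\ref{thm-main4nos}) and cites the unweighted independent-set result of Dvo\v{r}\'ak and Mnich as supporting evidence.

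As to the content of your proposal: it is a research plan, not a proof, and you yourself correctly identify the gap. The LP-duality reformulation is standard (it is exactly Lemma~\ref{lemma-eqw}), and the idea of upgrading the Dvo\v{r}\'ak--Mnich argument from uniform to arbitrary weights is the natural line of attack. But the ``lemma of the form'' you aim for is precisely the hard part, and nothing in your sketch indicates how to obtain it. Discharging arguments that produce a single large independent set do not automatically transfer to the weighted setting, because the charges are tied to vertex counts and face lengths, not to an adversarial weight function. Your branching heuristic for high-degree vertices (include $v$ or recurse on $G-v$) gives no uniform gain: if $w$ is concentrated on $N(v)$ and $v$ has large degree, neither branch beats $1/3$ by a fixed $\delta$. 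The final paragraph of your proposal is an honest statement that you do not know how to overcome this; that is the state of the art, and it is why the paper records the statement as a conjecture rather than a theorem.
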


Faces of length four are usually easy to deal with in the proofs by collapsing; thus the following
seemingly simpler variant of Conjecture~\ref{conj-h} is likely to be equivalent to it.

\begin{conjecture}[Dvořák and Mnich~\cite{dmnich}]
There exists $\delta>0$ such that every planar graph of girth at least five has
fractional chromatic number at most $3-\delta$.
\end{conjecture}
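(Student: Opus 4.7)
The plan is to reduce the conjecture to a weighted strengthening of the Dvořák--Mnich independence-ratio theorem, via linear-programming duality. Since
\[
\chi_f(G)=\max_{w\ge 0}\frac{\sum_{v\in V(G)}w(v)}{\alpha_w(G)},
\]
where $\alpha_w(G)$ denotes the maximum weight of an independent set for the nonnegative vertex weighting $w$, it suffices to prove that every planar graph $G$ of girth at least five and every such $w$ satisfy $\alpha_w(G)\ge w(V(G))/(3-\delta)$, for a fixed $\delta>0$. The uniform case $w\equiv 1$ is exactly the theorem of~\cite{dmnich}, so the task is to upgrade that proof to arbitrary nonnegative weights.

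The natural route is to mimic the discharging proof of~\cite{dmnich} in this weighted setting. Starting from a minimal weighted counterexample $(G,w)$, one assigns to each vertex $v$ an initial charge depending on $d(v)$ and to each face $f$ an initial charge depending on $\ell(f)$; by Euler's formula and the girth-five bound $\ell(f)\ge 5$, the total charge is strictly negative, and after redistributing charge from short faces to low-degree vertices by a suitable set of rules one locates a small reducible configuration $H$ (typically a cluster of $2$- and $3$-vertices incident with a single $5$-face). For each such $H$, the inductive hypothesis applied to $G-V(H)$ yields a large-weight independent set $I_0$ in $G-V(H)$, which one then extends by a local weighted optimization inside $H$; the crucial inequality is that the extended set has weight at least $w(V(G))/(3-\delta)$. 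If every configuration in the (finite) list satisfies this inequality for a common $\delta>0$, induction on $|V(G)|$ closes the proof.

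I expect the main obstacle to be the weighted reducibility step itself. In the uniform case each vertex of $H$ contributes $1$ to both sides of the key inequality, but under arbitrary weights a heavy vertex $v\in V(H)$ that is excluded by the local optimum still contributes its full weight to $w(V(G))$, which is what makes the naive adaptation degenerate to $\delta=0$. A natural remedy is to broaden the notion of reducibility: rather than insisting on a single extension, one demands that for every weight pattern on $V(H)$ there exists a probability distribution over extensions into $H$ whose expected weight ratio is at least $1/(3-\delta)$; this translates, for each candidate configuration, to a finite-dimensional linear program in the weights on $V(H)$ and the coefficients of the distribution, which can be checked case by case (by hand or by computer). A secondary obstacle is that the standard blow-up trick for converting weighted independent-set problems to unweighted ones is unavailable here: blowing up a vertex of degree at least two creates a $4$-cycle and destroys the girth hypothesis, so the reduction to~\cite{dmnich} must be carried out intrinsically on the weighted graph rather than by passing to a larger uniform instance.
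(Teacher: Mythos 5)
This statement is a \emph{conjecture}, not a theorem: the paper presents it as an open problem (attributed to Dvořák and Mnich) and offers no proof, only noting that the unweighted independence-ratio analogue from~\cite{dmnich} is evidence in its favor. So there is no ``paper's own proof'' to compare against.

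Your proposal is not a proof either, and to your credit you do not claim it is. The first paragraph correctly reformulates the problem via LP duality: one needs, uniformly in $w$, that $\alpha_w(G)\ge w(V(G))/(3-\delta)$. The second and third paragraphs are a research plan, and the gap is exactly where you locate it. The discharging and unavoidable-configuration machinery of~\cite{dmnich} is calibrated to the uniform weighting, and you correctly observe that a single heavy vertex in a reducible configuration can force the local extension ratio down to $1/3$, collapsing $\delta$ to $0$; you also correctly observe that the usual blow-up trick for passing from weighted to unweighted independence is ruled out by the girth constraint (duplicating a vertex of degree $\ge 2$ creates a $4$-cycle). The proposed remedy---requiring, for each configuration $H$, a distribution over extensions whose worst-case weighted ratio is bounded away from $1/3$, verified by a per-configuration linear program---is a sensible formalization of what ``weighted reducibility'' should mean (it is in the spirit of the witness/$f$-coloring technique the paper itself uses for Theorem~\ref{thm-main}), but you neither exhibit the configuration list nor check any of the LPs, and there is no a priori reason the configurations that suffice for the unweighted theorem remain reducible in this stronger sense. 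As it stands this is an outline of an attack on an open problem, with the hard step left entirely open.
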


%
\section{Notation and auxiliary results} 
\label{sec:note}
Let $\mu$ be the Lebesgue measure on real numbers.  Let $G$ be a graph.  If a
function $\varphi$ assigns to each vertex of $G$ a measurable subset of
$[0,1]$ and $\varphi(u)\cap \varphi(v)=\emptyset$ for all edges $uv$ of $G$,
we say that $\varphi$ is a \emph{fractional coloring} of $G$.  Let $f\colon
V(G)\to Q\cap [0,1]$ be a function with rational values.  If
the fractional coloring $\phi$ satisfies
$\mu(\varphi(v))\ge f(v)$ for every $v\in V(G)$, then we say that $\varphi$ is
an \emph{$f$-coloring} of $G$.  If $\mu(\varphi(v))=f(v)$ for every $v\in
V(G)$, then we say that $\varphi$ is a \emph{tight $f$-coloring}.  Note that
if $G$ has an $f$-coloring, then it also has a tight one.  For $x\in Q\cap
[0,1]$, let $c_x$ denote the constant function assigning the value $x$ to each
vertex of $G$.  The \emph{fractional chromatic number} of $G$ is defined as
\[
\chi_f(G)=\frac{1}{\sup \sst{x\in Q\cap [0,1]}{\text{$G$ has a $c_x$-coloring}}}.
\]
Let $w\colon V(G)\to \R^+$ be an arbitrary function.  For a set $X\subseteq
V(G)$, by $w(X)$ we mean $\sum_{v\in X} w(v)$.  Let $w(f) = \sum_{v\in V(G)}
f(v)w(v)$.  An integer $N\ge 1$ is a \emph{common denominator} of $f$ if
$Nf(v)$ is an integer for every $v\in V(G)$.  Setting $[N]=\{1,\ldots,N\}$,
a function $\psi\colon V(G)\to
\mathcal{P}([N])$ is an \emph{$(f,N)$-coloring} of $G$ if $\psi(u)\cap
\psi(v)=\emptyset$ for every $uv\in E(G)$ and $|\psi(v)|\ge Nf(v)$ for every
$v\in V(G)$.  The $(f,N)$-coloring is \emph{tight} if $|\psi(v)|=Nf(v)$ for
every $v\in V(G)$.

The fractional chromatic number of a graph can be expressed in various equivalent
ways, based on its well known linear programming formulation and duality.
The proof of the following lemma can be found e.g. in Dvořák \emph{et al.~}{\cite[Theorem~2.1]{fracsub}}.
\begin{lemma}\label{lemma-eqw}
Let $G$ be a graph and $f\colon V(G)\to Q\cap [0,1]$ a function.  The following
statements are equivalent.
\begin{itemize}
\item The graph $G$ has an $f$-coloring.
\item There exists a common denominator $N$ of $f$ such that $G$ has an $(f,N)$-coloring.
\item For every $w\colon V(G)\to \R^+$, there exists an independent set $X\subseteq V(G)$ with $w(X)\ge w(f)$.
\end{itemize}
\end{lemma}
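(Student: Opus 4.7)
The plan is to prove the cycle $(2)\Rightarrow(1)\Rightarrow(3)\Rightarrow(2)$. The implication $(2)\Rightarrow(1)$ is a simple discretization, $(1)\Rightarrow(3)$ is a standard averaging argument over the parameter $t\in[0,1]$, and $(3)\Rightarrow(2)$ is the substantial step, combining linear-programming duality with the rationality of the LP that governs fractional coloring.

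For $(2)\Rightarrow(1)$, given an $(f,N)$-coloring $\psi$ set $\varphi(v)=\bigcup_{i\in\psi(v)}[(i-1)/N,i/N)$; then $\mu(\varphi(v))=|\psi(v)|/N\ge f(v)$, and $\varphi(u)\cap\varphi(v)=\emptyset$ along edges. For $(1)\Rightarrow(3)$, fix an $f$-coloring $\varphi$ and $w\colon V(G)\to\R^+$. For every $t\in[0,1]$ the slice $X_t=\sst{v\in V(G)}{t\in\varphi(v)}$ is an independent set (otherwise two adjacent colors would share the point $t$), and Fubini gives
\[
\int_0^1 w(X_t)\,dt=\sum_{v\in V(G)}w(v)\mu(\varphi(v))\ge w(f),
\]
so some $X_t$ satisfies $w(X_t)\ge w(f)$.

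For $(3)\Rightarrow(2)$, introduce a variable $x_I$ for each independent set $I$ of $G$ and consider the linear program
\[
\min\sum_I x_I\quad\text{s.t.}\quad\sum_{I\ni v}x_I\ge f(v)\ \forall v\in V(G),\quad x_I\ge 0,
\]
whose dual is
\[
\max\sum_{v\in V(G)}f(v)w(v)\quad\text{s.t.}\quad\sum_{v\in I}w(v)\le 1\ \forall\,I,\quad w(v)\ge 0.
\]
Statement $(3)$ implies that every dual-feasible $w$ satisfies $w(f)\le\max_I w(I)\le 1$, so the dual optimum is at most $1$; LP duality then provides a primal solution with $\sum_I x_I\le 1$. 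The LP has rational data (incidence coefficients in $\{0,1\}$ and $f(v)\in Q$), so an optimal primal solution $\{x_I^*\}$ with rational coordinates exists. Let $N$ be a common multiple of the denominators of the $x_I^*$ and of $f$; then the numbers $Nx_I^*$ are nonnegative integers summing to at most $N$. Allot to each independent set $I$ a subset of $[N]$ of size $Nx_I^*$, making these subsets pairwise disjoint, and for each vertex $v$ let $\psi(v)$ be the union of the subsets assigned to independent sets containing $v$. Then $|\psi(v)|=\sum_{I\ni v}Nx_I^*\ge Nf(v)$ and $\psi$ is an $(f,N)$-coloring.

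I expect the main obstacle to be the extraction of a \emph{rational} optimal solution in the last step: this is precisely what lets us pass from a real-valued fractional $f$-coloring to a genuine $(f,N)$-coloring with integer multiplicities. It rests on the standard fact that a linear program with rational data attains its optimum at a rational vertex of the feasible polyhedron; without this observation only statements $(1)$ and $(3)$ would be seen to be equivalent.
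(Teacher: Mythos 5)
Your proof is correct. Note that the paper does not actually prove Lemma~\ref{lemma-eqw} itself but cites it from Dvořák, Sereni, and Volec~\cite[Theorem~2.1]{fracsub}; your argument, with the clean cycle $(2)\Rightarrow(1)\Rightarrow(3)\Rightarrow(2)$, the Fubini/averaging slice for $(1)\Rightarrow(3)$, and the rational basic optimal solution of the covering LP together with its duality for $(3)\Rightarrow(2)$, is the standard route to this equivalence and is in the spirit of the cited source. The only points worth polishing are cosmetic: you should note that the primal covering LP is feasible (e.g.\ take $x_{\{v\}}=1$ for every $v$) before invoking strong duality, and if $\R^+$ is read as strictly positive reals in statement~$(3)$, a short perturbation-and-limit remark extends the conclusion to all nonnegative weight functions $w$, which is what the dual feasibility argument actually requires.
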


We need several results related to Gr\"otzsch's theorem.
The following lemma was proved for vertices of degree at most three by
Steinberg and Tovey~\cite{SteinbergTovey1993}.
The proof for vertices of degree four follows from the results of Dvořák and Lidický~\cite{col8cyc},
as observed by Dvořák \emph{et al.}~\cite{trfree5}.
\begin{lemma}\label{lemma-tovey}
If $G$ is a triangle-free planar graph and $v$ is a vertex of $G$ of degree at most four,
then there exists a $3$-coloring of $G$ such that all neighbors of $v$ have the same color.
\end{lemma}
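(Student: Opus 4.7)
The plan is to construct a new triangle-free planar graph $G^\ast$ from $G$ by merging the neighbors $u_1,\ldots,u_d$ of $v$ (where $d=\deg(v)\le 4$) into a single vertex $u$, and then apply Grötzsch's theorem to $G^\ast$. Explicitly, I would delete $v$ and identify $u_1,\ldots,u_d$ (which form an independent set, since $G$ is triangle-free) into one new vertex $u$, suppressing any parallel edges that arise; call the resulting graph $G^\ast$. Any proper $3$-coloring of $G^\ast$ then extends to a $3$-coloring of $G$ by assigning the color of $u$ to each $u_i$ and any other color to $v$, and in this extension all neighbors of $v$ receive the same color, which is precisely what we want.

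Planarity of $G^\ast$ is immediate: since $u_1,\ldots,u_d$ all lie on the boundary of the face of $G-v$ that previously contained $v$, the identification can be performed inside that face without creating crossings. Triangle-freeness is more subtle. Any triangle of $G^\ast$ must be incident to $u$, and a triangle $ubc$ corresponds to an edge $bc$ of $G$ together with edges $u_ib$ and $u_jc$ of $G$, where necessarily $i\neq j$ (the case $i=j$ would already produce a triangle $u_ibc$ in $G$). Hence each triangle of $G^\ast$ witnesses a $5$-cycle $vu_ibcu_j$ through $v$ in $G$, and conversely. Consequently, if $G$ contains no $5$-cycle through $v$, then $G^\ast$ is triangle-free and Grötzsch's theorem yields the desired coloring directly.

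The main obstacle is to accommodate $5$-cycles through $v$, and this is where the external inputs come in. For $d\le 3$, Steinberg and Tovey handle the obstructing $5$-cycles through a careful analysis of a minimum counterexample, showing that each such $5$-cycle can be resolved by a local modification preserving planarity and triangle-freeness, after which Grötzsch's theorem becomes applicable. For $d=4$, the appropriate tool is the precoloring extension result of Dvořák and Lidický~\cite{col8cyc} for $3$-colorings of triangle-free planar graphs in which a cycle of length at most eight has been precolored. As observed in~\cite{trfree5}, the constraint that $u_1,\ldots,u_4$ all receive the same color can be encoded as a precoloring of such a short cycle surrounding $v$ in a suitable auxiliary graph built from $G$, and their theorem then provides the needed coloring.

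The principal technical difficulty is in the $d=4$ case: one must set up the auxiliary graph so that the monochromatic requirement on $u_1,\ldots,u_4$ translates into a valid instance of the Dvořák--Lidický precoloring extension, verifying in particular that no short separating cycle obstructs the extension and that the prescribed colors on the length-$\le 8$ cycle are consistent. Once this verification is carried out, the conclusion of the lemma follows immediately from the cited results.
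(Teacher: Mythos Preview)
Your proposal aligns with the paper's treatment: the paper does not give its own proof of this lemma but simply attributes the case $\deg(v)\le 3$ to Steinberg and Tovey~\cite{SteinbergTovey1993} and the case $\deg(v)=4$ to the precoloring-extension result of Dvořák and Lidický~\cite{col8cyc} via the observation in~\cite{trfree5}, which is exactly what you do. Your additional discussion---the neighbor-identification construction, the planarity argument, and the identification of $5$-cycles through $v$ as the obstruction to triangle-freeness of $G^\ast$---is correct and provides useful motivation, but the substantive content is the same deferral to the cited external results.
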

\noindent In fact, Dvořák \emph{et al.}~\cite{trfree5} proved the following stronger statement.
\begin{lemma}\label{lemma-far}
There exists an integer $D\ge 4$ with the following property.
Let $G$ be a triangle-free planar graph without separating $4$-cycles and
let $X$ be a set of vertices of $G$ of degree at most four.  If the distance between every two vertices in $X$ is at least $D$,
then there exists a $3$-coloring of $G$ such that all neighbors of vertices of $X$ have the same color.
\end{lemma}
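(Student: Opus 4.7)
The plan is to reduce Lemma~\ref{lemma-far} to a Grötzsch-type coloring statement by locally contracting the neighborhood of each vertex of $X$. For each $x\in X$ with $N(x)=\{u_1,\dots,u_k\}$ (where $k\le 4$), I would simultaneously delete $x$ and identify $u_1,\dots,u_k$ into a single new vertex $v_x$; let $G'$ denote the resulting graph. Any proper 3-coloring $\varphi'$ of $G'$ lifts to a 3-coloring $\varphi$ of $G$ by setting $\varphi(u_i)\colonequals\varphi'(v_x)$ for every $u_i\in N(x)$ and choosing $\varphi(x)\in\{1,2,3\}\setminus\{\varphi'(v_x)\}$; by construction all neighbors of $x$ then share the color $\varphi'(v_x)$. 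It therefore suffices to 3-color $G'$.

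The easy structural properties of $G'$ follow directly from the hypotheses. Since $G$ is triangle-free, each $N(x)$ is independent in $G$, so the identifications create no self-loops. Because $N(x)$ lies on the facial walk around $x$ in $G-x$, the identification preserves planarity (draw $v_x$ inside that face). Taking $D$ sufficiently large ensures that no edge of $G$ joins two distinct identified neighborhoods, and more generally that the local modifications made at different vertices of $X$ affect pairwise disjoint regions of $G'$.

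The main obstacle is that $G'$ need \emph{not} be triangle-free: a 5-cycle $xu_iabu_jx$ in $G$ becomes a triangle $v_xab$ in $G'$, and a 4-cycle through $x$ becomes a multi-edge at $v_x$. This is where the no-separating-$4$-cycle hypothesis and the freedom to take $D$ large come into play. Choosing $D$ sufficiently large, the triangular and multi-edge obstructions created at distinct contractions lie in pairwise far-apart regions of $G'$. The no-separating-$4$-cycle assumption forces every 4-cycle of $G$ through $x$ to bound a face of $G$, which lets me argue that multi-edges at $v_x$ can be safely suppressed without changing the coloring problem. I would then complete the proof by adapting the coloring-extension machinery behind Lemma~\ref{lemma-tovey}, which ultimately goes back to Dvořák and Lidický~\cite{col8cyc}, to planar graphs all of whose triangles lie in a few pairwise far-apart local regions—for instance by induction on the number of such regions, with a precolored-boundary extension argument that exploits the distance-$D$ separation at each step. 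This last step is where the bulk of the technical work lies and constitutes the real heart of the argument.
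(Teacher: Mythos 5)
The paper itself does not prove Lemma~\ref{lemma-far}; it cites the result from the manuscript of Dvořák, Král' and Thomas~\cite{trfree5} on coloring planar graphs with distant anomalies, so there is no in-paper argument to compare against.

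Your reduction is sound as far as it goes: deleting each $x\in X$ and identifying $N(x)$ into a single vertex $v_x$ preserves planarity, creates no self-loops because $N(x)$ is independent (triangle-freeness), and any proper $3$-coloring of the contracted graph $G'$ lifts to a $3$-coloring of $G$ in which the neighbors of each $x$ share a color. Your catalogue of what can go wrong is also correct: $5$-cycles through $x$ collapse to triangles at $v_x$, $4$-cycles through $x$ collapse to multi-edges, and taking $D$ large keeps these defects in pairwise far-apart regions. But the proposal then reduces to the assertion that a planar graph all of whose triangles are confined to a bounded number of pairwise far-apart regions is $3$-colorable, and you explicitly defer that to ``adapting the coloring-extension machinery behind Lemma~\ref{lemma-tovey}.'' That deferred step is not a technicality: it is precisely the distant-anomalies theorem that \cite{trfree5} is devoted to proving, and it is far from elementary even in the single-anomaly case (already for one vertex of degree four, Lemma~\ref{lemma-tovey} requires the precolored $8$-cycle machinery of~\cite{col8cyc}). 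So the argument, as written, translates the lemma into an essentially equivalent distant-anomalies form and then invokes exactly the theorem being asked for. There is also a smaller gap: a single triangular region can obstruct $3$-colorability of a planar graph, so distance alone is not enough; one would need to extract and use the specific local structure of each contracted region (coming from $\deg x\le 4$ and the absence of separating $4$-cycles) to argue that each region individually admits the needed precoloring extensions. You gesture at this but do not pin it down. You are candid that the heart of the argument is missing, and indeed it is.
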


Let $G$ be a triangle-free plane graph.  A $5$-face $f=v_1v_2v_3v_4v_5$ of $G$ is \emph{safe}
if $v_1$, $v_2$, $v_3$ and $v_4$ have degree exactly three, their
neighbors $x_1$, \ldots, $x_4$ (respectively) not incident with $f$ are
pairwise distinct and non-adjacent, and
\begin{itemize}
\item the distance between $x_2$ and $v_5$ in $G-\{v_1,v_2,v_3,v_4\}$
is at least four, and
\item $G-\{v_1,v_2,v_3,v_4\}$ contains no path of length exactly
three between $x_3$ and $x_4$.
\end{itemize}

\begin{lemma}[Dvořák \emph{et al.}~{\cite[Lemma~2.2]{DvoKawTho}}]\label{lemma-safe}
If $G$ is a plane triangle-free graph of minimum degree at least three
and all faces of $G$ have length five, then $G$ has a safe face.
\end{lemma}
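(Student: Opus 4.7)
The plan is to argue by contradiction via a discharging argument. Assume every $5$-face of $G$ is unsafe, and assign initial charge $d(v)-4$ to each vertex $v$ and $\ell(f)-4=1$ to each face $f$; since $G$ is planar and every face has length five, Euler's formula gives a total charge of exactly $-8$.

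First I would enumerate the four obstructions that can prevent a fixed labeling $v_1v_2v_3v_4v_5$ of a $5$-face from being safe: some $v_i$ with $i\le 4$ has degree at least $4$; the external neighbors $x_1,x_2,x_3,x_4$ fail to be pairwise distinct and non-adjacent; there is a path of length at most $3$ from $x_2$ to $v_5$ in $G-\{v_1,\dots,v_4\}$; or there is a path of length exactly $3$ from $x_3$ to $x_4$ in $G-\{v_1,\dots,v_4\}$. Since a $5$-face admits ten labelings (five rotations and two reflections) and unsafety forces at least one obstruction in every labeling, I would classify faces by which obstruction is triggered by which labeling. Each obstruction of the last two types provides a cycle through the face of length at most $7$ or $8$, which, combined with the girth-$5$ and triangle-free hypotheses, sharply restricts the local structure.

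Next, I would design discharging rules that push the $+1$ on every $5$-face outward: a fixed share to each incident vertex of degree at least $4$, paid from that vertex's surplus $d(v)-4$, and the remainder routed along the short cycles that witness the obstructions. Verifying the rules reduces to showing that an unsafe face either contains an incident vertex of degree at least~$4$, or is incident to enough obstructing short cycles to absorb its $+1$, so that every vertex and every face end with non-negative charge, contradicting the global total of $-8$.

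The main difficulty I expect is the asymmetric role of $v_5$: one unsafe face can be witnessed by different short cycles for different labelings, and the same short cycle in $G$ can simultaneously obstruct several adjacent faces. To prevent double counting I would attach to every short cycle a canonical \emph{owner face} (for instance, the face incident to a distinguished edge of the cycle and lying inside the disk it bounds), and argue that this owner face discharges its $+1$ exactly once via the cycle. A cleaner variant would be to first use an Euler-style count to locate a face whose $v_1,\dots,v_4$ all have degree exactly $3$ with pairwise distinct, pairwise non-adjacent external neighbors, and then prove by case analysis on the possible short cycles obstructing the remaining two conditions that some cyclic rotation of the labeling satisfies all of them.
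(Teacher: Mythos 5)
This lemma is not proved in the paper: it is cited verbatim from Dvořák, Kawarabayashi, and Thomas (reference \cite{DvoKawTho}, Lemma~2.2), so there is no in-paper proof to compare with. Your instinct that the source proves it by discharging is right in spirit, but your sketch has a concrete gap at its core. With charges $d(v)-4$ on vertices and $\ell(f)-4=1$ on faces, every degree-$3$ vertex starts at $-1$ and lies on exactly three faces; to end non-negative it would have to receive at least $1/3$ from each incident face, yet a face may have all five of its boundary vertices of degree $3$ (this is the generic situation in the graphs relevant here, e.g.\ the dodecahedron, or whenever degree-$\ge4$ vertices are sparse), and such a face would then have to give away at least $5/3>1$ and go negative. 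So the rule ``push the $+1$ of each face outward'' cannot by itself make all charges non-negative; the rules must also move charge between faces and/or between vertices in a way tied to the obstruction structure, and that is exactly the part you leave unspecified. Your ``cleaner variant'' does not repair this: an Euler-type count does yield a face with four consecutive degree-$3$ vertices (one can show that not every face can carry two vertices of degree $\ge4$), but guaranteeing in addition that the external neighbours $x_1,\dots,x_4$ are pairwise distinct and non-adjacent and that the distance and length-$3$-path conditions hold is precisely the content of the lemma; merely rotating the labeling of a single chosen face does not give it, and indeed a single face may be unsafe under every labeling. In short, the tool is plausible but the calibration you propose is provably insufficient and the actual discharging rules and case analysis, which are the entire substance of the proof, are missing.
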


Finally, let us recall the folding lemma, which is frequently used in the
coloring theory of planar graphs.

\begin{lemma}[Klostermeyer and Zhang~\cite{KloZhang}]\label{lemma-folding}
Let $G$ be a planar graph with odd-girth $g>3$.
If $C=v_0v_1\dots v_{r-1}$ is a facial circuit of $G$ with $r\ne g$,
then there is an integer $i\in\{0,\ldots,r-1\}$ such that
the graph $G'$ obtained from $G$ by identifying $v_{i-1}$ and $v_{i+1}$
(where indices are taken modulo $r$) is also of odd-girth $g$.
\end{lemma}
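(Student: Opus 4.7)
My plan is a proof by contradiction. Suppose that for every $i \in \{0,\ldots,r-1\}$ the graph $G'_i$ obtained by identifying $v_{i-1}$ and $v_{i+1}$ has odd-girth strictly smaller than $g$. A new odd cycle in $G'_i$ must pass through the identified vertex, so it corresponds to an odd $(v_{i-1},v_{i+1})$-walk in $G$ of the same length; let $P_i$ be a shortest such walk, which is necessarily a path of odd length $\ell_i<g$. Concatenating $P_i$ with the length-two subpath $v_{i-1}v_iv_{i+1}$ of $C$ yields an odd closed walk in $G$ of length $\ell_i+2$, which must contain an odd cycle of length at least $g$. Combined with $\ell_i<g$ and parity, this forces $\ell_i=g-2$ exactly; moreover, the resulting length-$g$ closed walk cannot decompose into strictly shorter odd walks, so it is itself a simple odd cycle $C_i$. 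In particular, $P_i$ avoids $v_i$ and the edges $v_{i-1}v_i$, $v_iv_{i+1}$.

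Next I exploit planarity. Fix an embedding in which $C$ bounds a face $F$; each path $P_i$ lies in the closed complement of $F$, so each $C_i$ is a simple closed curve whose bounded disk contains $F$ and has only $v_i$ of $C$ on its boundary. The plan is to use the family $\{C_i\}_{i=0}^{r-1}$ to contradict the minimality of $g$. The key rerouting tool is: if two paths $P_i$ and $P_j$ share an internal vertex $u$, splitting them at $u$ yields four subpaths whose lengths sum to $2(g-2)$; these can be recombined in two essentially different ways into $(v_{i\pm 1},v_{j\pm 1})$-walks of total length $2(g-2)$, so one of the two recombined walks has odd length at most $g-3$. Combined with the appropriate subpath of $C$ of compatible parity, it produces an odd closed walk in $G$ of length strictly less than $g$, contradicting the odd-girth.

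The main obstacle is the planar step: showing that some pair $P_i,P_j$ must share a vertex in the required way, or that the absence of such a sharing violates plane combinatorics. This is where the hypothesis $r\neq g$ enters. When $r$ is even, a parity count around $C$---summing the odd contribution $g$ made by each $C_i$ against the even facial length $r$---produces an immediate inconsistency. When $r$ is odd, minimality of $g$ and the fact that $C$ is an odd cycle imply $r>g$, so $C$ is a longer odd cycle than $g$; in this case the $r$ disks bounded by the $C_i$ must all contain $F$ on a common side, and I would argue that in a planar embedding these $r$ curves cannot be pairwise arranged without two of them forcing the shared-vertex situation above. Turning this geometric necessity into a rigorous combinatorial statement (most naturally via Euler's formula or a crossing-sequence argument along $C$) is the delicate step I expect to be the crux of the proof.
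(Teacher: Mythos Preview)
The paper does not supply a proof of this lemma; it is quoted from Klostermeyer and Zhang and used as a black box, so there is no in-paper argument to compare your attempt against.

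On your proposal itself: the opening reductions are correct and standard. From the failure of every fold you correctly extract odd $(v_{i-1},v_{i+1})$-paths $P_i$ of length exactly $g-2$, and the closed walk $P_i\cup\{v_{i-1}v_i,\,v_iv_{i+1}\}$ is indeed a simple $g$-cycle $C_i$, with $v_i\notin V(P_i)$.

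After that, however, the argument does not close, and one concrete step is wrong. In your rerouting lemma you assert that if $P_i$ and $P_j$ meet at an internal vertex $u$ then ``one of the two recombined walks has odd length at most $g-3$''. Write the four subpath lengths as $a+b=c+d=g-2$. Exactly one of the two recombination pairs $(a{+}c,\,b{+}d)$ and $(a{+}d,\,b{+}c)$ consists of two odd numbers, and those two sum to $2g-4$; hence the smaller one is at most $g-2$, not $g-3$. Since $g-2$ is itself odd, equality can occur (e.g.\ $a=c$, $b=d$), and then closing the resulting odd $(v_{i\pm1},v_{j\pm1})$-walk along an arc of $C$ produces an odd closed walk of length $\ge g$---no contradiction. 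So an intersection of two $P_i$'s does not, by itself, violate the odd-girth.

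Your geometric setup also has loose ends: you have only shown that $P_i$ avoids $v_i$, not that it avoids the rest of $C$, so $C_i$ may meet $C$ in more than $\{v_{i-1},v_i,v_{i+1}\}$, and the clean ``bounded disk containing $F$ with only $v_i$ of $C$ on its boundary'' picture need not hold (indeed $v_{i-1},v_{i+1}\in C_i$ already). Finally, the ``parity count'' for even $r$ and the disk-containment argument for odd $r>g$ are, as you yourself flag, not carried out. As written this is a plan with a faulty key lemma and an unexecuted main step, not a proof.
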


\section{Proofs}

First, let us show a lemma based on the idea of Hilton \emph{et. al.}~\cite{planfr5}.

\begin{lemma}\label{lemma-wtone}
Let $G$ be a planar triangle-free graph and let $w:V(G)\to R^+$ be an arbitrary function.
If $v\in V(G)$ has degree at most $4$, then $G$ contains an independent set $X$ such that
$w(X)\ge \frac{w(V(G))+w(v)}{3}$.
\end{lemma}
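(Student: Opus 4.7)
The plan is to use Lemma~\ref{lemma-tovey} to produce a $3$-coloring in which the neighbors of $v$ are monochromatic, and then to exhibit three independent sets whose total $w$-weight slightly exceeds $w(V(G))$ thanks to $v$ being allowed to appear in more than one of them.

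More precisely, I would first apply Lemma~\ref{lemma-tovey} to the vertex $v$ (whose degree is at most four) to obtain a proper $3$-coloring $\varphi\colon V(G)\to\{1,2,3\}$ in which every neighbor of $v$ receives color $1$. Consequently $\varphi(v)\neq 1$, and after relabeling the colors we may assume that $\varphi(v)=2$. Write $I_j = \varphi^{-1}(j)$ for $j\in\{1,2,3\}$; these three sets are pairwise disjoint, they cover $V(G)$, and each is independent.

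The key observation is that $v$ can be moved to color class $3$ without destroying independence, because every neighbor of $v$ lies in $I_1$ and hence none lies in $I_3$. So the set $I_3'\colonequals I_3\cup\{v\}$ is also independent. Now consider the three independent sets $I_1$, $I_2$ and $I_3'$. Summing their $w$-weights and using that $v$ appears in both $I_2$ and $I_3'$ but no other vertex is counted twice, we obtain
\[
w(I_1)+w(I_2)+w(I_3')=w(V(G))+w(v).
\]
Therefore at least one of the three independent sets has $w$-weight at least $\bigl(w(V(G))+w(v)\bigr)/3$, and taking $X$ to be such a set proves the lemma.

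There is essentially no obstacle: the only subtlety is verifying that $I_3\cup\{v\}$ is independent, which follows immediately from the defining property of the coloring provided by Lemma~\ref{lemma-tovey}. The degenerate cases (for instance when $v$ has no neighbors at all) are handled without change, since the neighbors of $v$ are then vacuously monochromatic and the same argument applies.
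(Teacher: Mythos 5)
Your proof is correct and rests on the same key ingredient as the paper's, namely Lemma~\ref{lemma-tovey} applied to $v$ to obtain a $3$-coloring in which the neighbors of $v$ are monochromatic, after which $v$ effectively belongs to two of the three color classes. The only difference is presentational: the paper packages the conclusion as an $f_v$-coloring with $f_v(v)=2/3$ and invokes Lemma~\ref{lemma-eqw} to extract the independent set, whereas you carry out the underlying averaging over the three independent sets $I_1$, $I_2$, $I_3\cup\{v\}$ by hand, which is equivalent and slightly more self-contained.
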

\begin{proof}
Lemma~\ref{lemma-tovey} implies that there exists a $3$-coloring of $G$ such that all neighbors of $v$ have the
same color.  Consequently, $G$ has an $f_v$-coloring for the function $f_v$
such that $f_v(z)=1/3$ for $z\in V(G)\setminus \{v\}$ and $f_v(v)=2/3$.
By Lemma~\ref{lemma-eqw}, there exists an independent set $X\subseteq V(G)$
such that $w(X)\ge w(f_v)=\frac{w(V(G))+w(v)}{3}$.
\end{proof}

Theorem~\ref{thm-main4} now readily follows.
\begin{proof}[Proof of Theorem~\ref{thm-main4}]
Let $G$ be a planar triangle-free $n$-vertex graph of maximum degree at most
four.  Consider any function $w\colon V(G)\to \R^+$, and let $v$ be the vertex to
which $w$ assigns the maximum value.  We have $w(v)\ge w(V(G))/n$.  By Lemma~\ref{lemma-wtone},
there exists an independent set $X$ such that $w(X)\ge \frac{w(V(G))+w(v)}{3}\ge \frac{n+1}{3n}w(V(G))$.  Therefore, for every
$w\colon V(G)\to \R^+$, there exists an independent set $X$ with $w(X)\ge
w(c_{(n+1)/(3n)})$.  By Lemma~\ref{lemma-eqw}, it follows that the
fractional chromatic number of $G$ is at most $\frac{3n}{n+1}$.
\end{proof}

Similarly, Lemma~\ref{lemma-far} implies Theorem~\ref{thm-main4nos}.
\begin{proof}[Proof of Theorem~\ref{thm-main4nos}]
Let $D$ be the constant of Lemma~\ref{lemma-far},
let $\delta_0=\frac{1}{3\cdot 4^D}$ and $\delta=\frac{9\delta_0}{3\delta_0+1}=\frac{3}{4^D+1}$.  Let $G$ be a planar triangle-free graph of maximum degree at most four and without separating $4$-cycles.
Clearly, it suffices to prove that $G$ has a $c_{1/3+\delta_0}$-coloring.

Let $G'$ be the graph obtained from $G$ by adding edges
between all pairs of vertices at distance at most $D-1$.  The maximum degree of $G'$ is less than $4^D$,
and thus $G'$ has a coloring by at most $4^D$ colors.  Let $C_1$, \ldots, $C_{4^D}$ be the color classes of this coloring.
For $i\in \{1,\ldots, 4^D\}$, let $f_i$ be the function defined by $f_i(v)=2/3$ for $v\in C_i$ and $f_i(v)=1/3$ for $v\in V(G)\setminus C_i$.
Lemma~\ref{lemma-far} ensures that $G$ has an $f_i$-coloring.

Consider any function $w:V(G)\to R^+$.  There exists $i\in \{1,\ldots, 4^D\}$ such that $w(C_i)\ge w(V(G))/4^D$.
By Lemma~\ref{lemma-eqw} applied for $f_i$, we conclude that $G$ contains an independent set $X$ such that
$w(X)\ge w(f_i)=\frac{w(V(G))+w(C_i)}{3}\ge (1/3+\delta_0)w(V(G))=w(c_{1/3+\delta_0})$.
Since the choice of $w$ was arbitrary, Lemma~\ref{lemma-eqw} implies that $G$ has a $c_{1/3+\delta_0}$-coloring.
\end{proof}

The proof of Theorem~\ref{thm-main} is somewhat more involved.
Let $\varepsilon=1/9$ and for $n\ge 1$, let $b(n)=1/3+\varepsilon/n$.  Let $G$
be a plane triangle-free graph.  We say that $G$ is a \emph{counterexample} if
$G$ does not have a $c_{b(|V(G)|)}$-coloring.  We say that $G$ is a
\emph{minimal counterexample} if $G$ is a counterexample and no plane
triangle-free graph with fewer than $|V(G)|$ vertices is a counterexample.
Since $b$ is a decreasing function, every minimal counterexample is connected.
\begin{lemma}\label{lemma-mindeg2}
If $G$ is a minimal counterexample, then $G$ is $2$-connected.  Consequently, the minimum degree of $G$ is at least two.
\end{lemma}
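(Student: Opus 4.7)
The plan is to rule out cut vertices in a minimal counterexample $G$ by pasting together smaller colorings (so $G$ is $2$-connected), and then to derive the minimum-degree bound as an immediate corollary. Throughout, write $n = |V(G)|$ and $b_0 = b(n)$.

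First I would dispose of the small sizes. Both $b(1) = 4/9$ and $b(2) = 7/18$ lie below $1/2$, so a single vertex admits a $c_{b(1)}$-coloring (by assigning it $[0,4/9]$) and any triangle-free graph on two vertices admits a $c_{b(2)}$-coloring (by assigning the disjoint intervals $[0,7/18]$ and $[7/18, 14/18]$). Hence $n \geq 3$.

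Next I would suppose toward a contradiction that $G$ has a cut vertex $v$, and decompose $G = G_1 \cup G_2$ with $V(G_1) \cap V(G_2) = \{v\}$, the edge sets $E(G_1)$ and $E(G_2)$ partitioning $E(G)$, and $|V(G_i)| < n$ for each $i$. By minimality, each $G_i$ admits a $c_{b(|V(G_i)|)}$-coloring, which is also a $c_{b_0}$-coloring since $b$ is decreasing. To merge them, I would invoke Lemma~\ref{lemma-eqw} to pass to tight $(c_{b_0}, N)$-colorings $\psi_1, \psi_2$ for a common denominator $N$ of $b_0$, so that $|\psi_1(v)| = |\psi_2(v)| = N b_0$; then I would pick a permutation $\pi$ of $[N]$ sending $\psi_2(v)$ onto $\psi_1(v)$, replace $\psi_2$ by $\pi \circ \psi_2$ (still a valid $(c_{b_0}, N)$-coloring of $G_2$), and take the union. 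The result is a $(c_{b_0}, N)$-coloring of $G$, so Lemma~\ref{lemma-eqw} furnishes a $c_{b_0}$-coloring of $G$, contradicting that $G$ is a counterexample.

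Hence $G$ has no cut vertex and is $2$-connected. Combined with $n \geq 3$, this forces minimum degree at least two: a vertex of degree zero would form an isolated component, while a vertex of degree one would render its unique neighbor a cut vertex (since at least one further vertex of $G$ remains). The only mildly delicate point in the argument is the color-permutation maneuver used to reconcile $\psi_1$ and $\psi_2$ at $v$, but this becomes essentially automatic once one works in the discrete $(f, N)$-formulation provided by Lemma~\ref{lemma-eqw}.
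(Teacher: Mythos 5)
Your proposal is correct and follows essentially the same route as the paper's proof: rule out $n\le 2$, split at a cut vertex, use minimality plus the monotonicity of $b$ to obtain $c_{b(n)}$-colorings of the two pieces, pass to $(c_{b(n)},N)$-colorings via Lemma~\ref{lemma-eqw}, align them at the cut vertex by a color permutation, and take the union. The only cosmetic difference is that you spell out the small cases and the degree-two consequence explicitly, which the paper leaves implicit.
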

\begin{proof}
Since $b(n)\le 1/2$, every counterexample has at least three vertices; hence, it suffices to prove that $G$ is
$2$-connected, and the bound on the minimum degree will follow.
Let $n$ be the number of vertices of $G$.

Suppose that $G$ is not $2$-connected, and let $G_1$ and $G_2$ be subgraphs of
$G$ such that $G=G_1\cup G_2$, the graph $G_1$ intersects $G_2$ in exactly one
vertex $v$, and both $n_1=|V(G_1)|$ and $n_2=|V(G_2)|$ are greater than $1$.
Since $n=n_1+n_2-1$, we have $n_1,n_2<n$, and thus neither $G_1$ nor $G_2$ is
a counterexample.  Consequently, $G_i$ has a $c_{b(n_i)}$-coloring for
$i\in\{1,2\}$.  Since $b$ is a decreasing function, we deduce that $G_i$ has a $c_{b(n)}$-coloring and hence,
by Lemma~\ref{lemma-eqw}, there exists $N\ge 1$ such that $G_i$ has a $(c_{b(n)},N)$-coloring $\varphi_i$.
By permuting the colors if necessary, we can assume that $\varphi_1(v)=\varphi_2(v)$, and
thus $\varphi_1\cup\varphi_2$ is a $(c_{b(n)}, N)$-coloring of $G$.  This
contradicts the assumption that $G$ is a counterexample.
\end{proof}

\begin{lemma}\label{lemma-faces}
If $G$ is a minimal counterexample, then every face of $G$ has length exactly $5$.
\end{lemma}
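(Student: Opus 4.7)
The strategy is to suppose for contradiction that the minimal counterexample $G$ has a face $C$ of length $r \ne 5$, use the folding lemma (Lemma~\ref{lemma-folding}) to reduce to a smaller plane triangle-free graph $G'$ on $n-1$ vertices, and lift a $c_{b(n-1)}$-coloring of $G'$ (which exists by minimality) back to a $c_{b(n)}$-coloring of $G$, exploiting that the function $b$ is strictly decreasing. Before applying folding I need information on the odd-girth of $G$: since every bipartite graph admits a $c_{1/2}$-coloring while $b(n) \le 1/2$, $G$ cannot be bipartite, so its odd-girth $g$ is finite, and triangle-freeness then forces $g \ge 5$.

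To invoke Lemma~\ref{lemma-folding} I must find a facial cycle whose length differs from $g$. If $r$ is even, then $r \ne g$ automatically, because $g$ is odd; if $r$ is odd and $g = 5$, then $r \ge 7 \ne g$. In either case I apply Lemma~\ref{lemma-folding} to $C = v_0 v_1 \cdots v_{r-1}$ to obtain an index $i$ such that identifying $v_{i-1}$ with $v_{i+1}$ yields a planar graph $G'$ of odd-girth $g \ge 5$, hence triangle-free, on $n-1$ vertices; the two identified vertices are non-adjacent in $G$ because $v_i$ is their common neighbor and $G$ is triangle-free. Writing $w$ for the identified vertex, minimality of $G$ furnishes a $c_{b(n-1)}$-coloring $\varphi'$ of $G'$, and the pullback $\varphi$ defined by $\varphi(v_{i-1}) = \varphi(v_{i+1}) = \varphi'(w)$ and $\varphi(u) = \varphi'(u)$ for every other vertex $u$ is a valid fractional coloring of $G$, since every edge of $G$ corresponds to an edge of $G'$ whose endpoints receive disjoint sets. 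Each color set then has measure at least $b(n-1) > b(n)$, so $\varphi$ is a $c_{b(n)}$-coloring of $G$, contradicting the counterexample assumption.

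The step I expect to be the main obstacle is the remaining subcase where $r$ is odd and $g = r \ge 7$, so that Lemma~\ref{lemma-folding} cannot be applied to $C$ itself. In this subcase $G$ has no $5$-face (a $5$-face would be a $5$-cycle, contradicting $g \ge 7$), so either some other face of $G$ has length different from $g$, in which case I fold that face instead and conclude exactly as above, or every face of $G$ has length exactly $g$. This last, extremely rigid sub-subcase I would attack by combining Euler's formula $|V(G)| - |E(G)| + |F(G)| = 2$ with the face-length identity $\sum_f \ell(f) = 2|E(G)|$ and the minimum-degree bound of two from Lemma~\ref{lemma-mindeg2} to either pin down a structure that can be colored directly or to derive an outright numerical contradiction.
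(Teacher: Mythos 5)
Your main line of argument — fold a face of length different from five via Lemma~\ref{lemma-folding}, lift a $c_{b(n-1)}$-coloring of the smaller graph $G'$ back to $G$ by giving both preimages the color of the merged vertex, and use $b(n)<b(n-1)$ — is exactly the paper's proof, and that part is carried out correctly. Your preliminary check that $G$ cannot be bipartite (so the odd-girth is finite) is a harmless bit of extra care that the paper leaves implicit.

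Where you diverge is the worry about the subcase $r=g\ge 7$, i.e.\ the chosen face has length equal to the odd-girth and that odd-girth exceeds five. Under a literal reading of Lemma~\ref{lemma-folding} as quoted (odd-girth \emph{exactly} $g$), you are right that the hypothesis $r\ne g$ is then not met, and the paper's one-shot application of the folding lemma to ``a face of length other than $5$'' silently skips this. In the form in which the folding lemma is actually proved and commonly used — odd-girth \emph{at least} $2k+1$, face of length $\ne 2k+1$, fold preserving odd-girth at least $2k+1$ — taking $2k+1=5$ makes the paper's application legitimate (every face in question has length $\ne 5$, and that is all that is needed), so your extra case split is in that sense unnecessary. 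Still, flagging the mismatch between the quoted statement and the way it is used is a fair catch.

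If you do want to pursue the subcase, however, your proposed finish does not close it. Combining Euler's formula with $\sum_f\ell(f)=2|E(G)|$ when every face has length $g$ gives $|E(G)|=\tfrac{g(|V(G)|-2)}{g-2}$, and the minimum-degree-\emph{two} bound from Lemma~\ref{lemma-mindeg2} only gives $|E(G)|\ge|V(G)|$, which simplifies to $|V(G)|\ge g$ and yields no contradiction; a single $g$-cycle already satisfies all of these constraints. What you need is the minimum-degree-\emph{three} bound of Lemma~\ref{lemma-mindeg3} (whose proof does not rely on the present lemma, so the two can be reordered): then $2|E(G)|\ge 3|V(G)|$ combined with the face-length count gives $-4g\ge |V(G)|(g-6)$, which is impossible once $g\ge 7$. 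As written, your last sub-subcase is left open.
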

\begin{proof}
Let $n$ be the number of vertices of $G$.
Suppose that $G$ has a face $f$ of length other than $5$.  Since $G$ is triangle-free, it has odd girth
at least five, and by Lemma~\ref{lemma-folding}, there exists a path $v_1v_2v_3$ in the boundary of $f$
such that the graph $G'$ obtained by identifying $v_1$ with $v_3$ to a single vertex $z$ has odd girth at least five as well.
It follows that $G'$ is triangle-free.  Since $G$ is a minimal counterexample, $G'$ has a $c_{b(n-1)}$-coloring,
and by giving both $v_1$ and $v_3$ the color of $z$, we obtain a $c_{b(n-1)}$-coloring of $G$.  Since $b(n)<b(n-1)$,
this contradicts the assumption that $G$ is a counterexample.
\end{proof}

Given a counterexample $G$ on $n$ vertices, a function $w\colon V(G)\to \R^+$
is a \emph{witness} if $G$ has no independent set $X$ satisfying $w(X)\ge
w(c_{b(n)})$.  By Lemma~\ref{lemma-eqw}, every counterexample has a witness.  Let us now state a useful
special case of Lemma~\ref{lemma-wtone}.
\begin{lemma}\label{lemma-witwt}
If $G$ is a counterexample on $n$ vertices, $w$ is a witness and $v\in V(G)$ has degree at most three, then $w(v)<3\varepsilon w(V(G))/n$.
\end{lemma}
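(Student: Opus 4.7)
The plan is to treat Lemma~\ref{lemma-witwt} as a direct numerical consequence of Lemma~\ref{lemma-wtone} combined with the definition of a witness, exactly as advertised by the phrase ``useful special case'' preceding the statement. The first step is to apply Lemma~\ref{lemma-wtone} to the vertex $v$: since $v$ has degree at most three (in particular, at most four), the lemma produces an independent set $X\subseteq V(G)$ with $w(X)\ge (w(V(G))+w(v))/3$.

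The second step uses that $w$ is a witness for the counterexample $G$. Unpacking the definition, no independent set of $G$ can satisfy $w(X)\ge w(c_{b(n)})$; expanding $w(c_{b(n)})=b(n)\cdot w(V(G))=(1/3+\varepsilon/n)\,w(V(G))$ therefore gives a strict upper bound on $w(X)$ for the set $X$ produced in the previous step.

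Chaining the two inequalities, I obtain $(w(V(G))+w(v))/3\le w(X)<(1/3+\varepsilon/n)\,w(V(G))$. Multiplying through by $3$ and subtracting $w(V(G))$ from both sides yields $w(v)<3\varepsilon w(V(G))/n$, which is the stated bound. There is no real obstacle here; the only subtlety worth noting is that the strictness inherited from the witness condition combines with the non-strict lower bound from Lemma~\ref{lemma-wtone} to give the strict inequality in the conclusion, and that the bound already works for degree at most four, so the hypothesis ``degree at most three'' is only what will be needed later rather than what is forced by this argument.
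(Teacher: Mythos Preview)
Your proof is correct and follows essentially the same approach as the paper: apply Lemma~\ref{lemma-wtone} to obtain an independent set with $w(X)\ge (w(V(G))+w(v))/3$, combine this with the strict bound $w(X)<w(c_{b(n)})=(1/3+\varepsilon/n)\,w(V(G))$ coming from the witness condition, and rearrange. Your remarks on the source of the strict inequality and on degree four already sufficing are accurate and match what the paper's argument implicitly uses.
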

\begin{proof}
Let $n$ be the number of vertices of $G$.  By Lemma~\ref{lemma-wtone}, there exists an independent set $X\subseteq V(G)$
with $w(X)\ge \frac{w(V(G))+w(v)}{3}$.  On the other hand, since $w$ is a witness, we have $w(X)<w(c_{b(n)})=\frac{w(V(G))}{3}+\frac{\varepsilon}{n}w(V(G))$.
The claim of this lemma follows.
\end{proof}

\begin{lemma}\label{lemma-mindeg3}
If $G$ is a minimal counterexample, then $G$ has minimum degree at least three.
\end{lemma}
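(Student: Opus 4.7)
Suppose for contradiction that $v\in V(G)$ has degree exactly two, with neighbors $u_1$ and $u_2$; in view of Lemma~\ref{lemma-mindeg2}, this is the only way the conclusion can fail. Set $n\coloneqq|V(G)|$ and let $w$ be a witness of $G$. The plan is to construct a smaller triangle-free plane graph $G^*$, use the minimality of $G$ to obtain a $c_{b(|V(G^*)|)}$-coloring of $G^*$, and translate it, via Lemma~\ref{lemma-eqw}, into an independent set of $G$ of $w$-weight at least $b(n)w(V(G))$, contradicting that $w$ is a witness.

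Lemma~\ref{lemma-faces} implies that the two faces of $G$ incident to $v$ are $5$-faces $vu_1a_1b_1u_2$ and $vu_1a_2b_2u_2$, sharing only the two edges at $v$; consequently, $u_1$ and $u_2$ lie on a common face of $G-v$ of length six, obtained by the merger of these two $5$-faces. I distinguish two cases, depending on whether $u_1$ and $u_2$ share a common neighbor in $G$ other than $v$. If they do not, set $G^*\coloneqq(G-v)+u_1u_2$: the new edge is drawn inside the aforementioned $6$-face, preserving planarity, and no new triangle is created since this would require a common neighbor of $u_1,u_2$ in $G-v$. Thus $G^*$ is a triangle-free plane graph on $n-1<n$ vertices, and by the minimality of $G$ it admits a $c_{b(n-1)}$-coloring. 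Applying Lemma~\ref{lemma-eqw} to $G^*$ with the weight $w'\colon V(G^*)\to\R^+$ defined by $w'(u_1)=w'(u_2)=0$ and $w'(x)=w(x)$ otherwise, I obtain an independent set $Y$ of $G^*$ with $w'(Y)\ge b(n-1)w'(V(G^*))$; since $w'$ vanishes on $\{u_1,u_2\}$, I may without loss of generality assume $Y\cap\{u_1,u_2\}=\emptyset$, and then $X\coloneqq Y\cup\{v\}$ is independent in $G$ with
\[
w(X)\ge b(n-1)\bigl(w(V(G))-w(v)-w(u_1)-w(u_2)\bigr)+w(v).
\]
Using the identity $b(n-1)-b(n)=\varepsilon/(n(n-1))$ together with the bound $w(v)<3\varepsilon w(V(G))/n$ from Lemma~\ref{lemma-witwt} and analogous bounds on $w(u_1)$ and $w(u_2)$ (to be obtained by applying Lemma~\ref{lemma-wtone} to $u_1$ and $u_2$ directly, after a sub-case split on their degrees), this rearranges to $w(X)\ge b(n)w(V(G))$, the desired contradiction.

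If on the other hand $u_1$ and $u_2$ share a common neighbor $c\ne v$, the cycle $u_1vu_2c$ of length four is not facial and therefore separates $G$ into two proper plane triangle-free subgraphs $G_1,G_2$, each containing the cycle and each on fewer than $n$ vertices. Following the pattern of Lemma~\ref{lemma-mindeg2}, but adapted to a four-vertex separator, I would invoke minimality on each $G_i$, take a common denominator, and align the two colorings on the separating $4$-cycle by permuting colors, producing a $c_{b(n)}$-coloring of $G$.

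The principal obstacle is the first case: a tight $c_{b(n-1)}$-coloring of $G^*$ uses combined measure $2b(n-1)>1-b(n)$ on $\{u_1,u_2\}$, ruling out the naive "color $G^*$, then extend to $v$" strategy. The contradiction must therefore be extracted on the independent-set side of Lemma~\ref{lemma-eqw}, and the final inequality balancing the bounds on $w(v)$, $w(u_1)$, $w(u_2)$ against the gap $b(n-1)-b(n)$ is expected to be tight precisely for the chosen value $\varepsilon=1/9$.
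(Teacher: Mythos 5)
The central gap is in your Case~1, and it is fatal: by setting $w'(u_1)=w'(u_2)=0$ you throw away the witness weight sitting on $u_1$ and $u_2$, and that weight is not boundable. Your parenthetical plan to bound $w(u_1)$ and $w(u_2)$ ``by applying Lemma~\ref{lemma-wtone} to $u_1$ and $u_2$ directly'' does not go through, because Lemma~\ref{lemma-wtone} (and hence Lemma~\ref{lemma-witwt}) requires the vertex to have degree at most four, and $u_1,u_2$ may have arbitrarily high degree in a minimal counterexample. If, say, $w$ is concentrated on $u_1$, your $X=Y\cup\{v\}$ has $w(X)\approx w(v)$, which is nowhere near $b(n)w(V(G))$. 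The root of the problem is that you are forcing the independent set to contain $v$; that is exactly what makes $u_1$ and $u_2$ invisible to it.

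The paper sidesteps this entirely, and the fix is instructive. It does \emph{not} add the edge $u_1u_2$ and hence has no case split. It simply takes a tight $(c_{b(n-1)},N)$-coloring of $G-v$ and extends it to $v$ by assigning $v$ the leftover colors, which amounts to the demand function $f$ with $f(x)=b(n-1)$ for $x\ne v$ and $f(v)=1-2b(n-1)$. Crucially, $u_1$ and $u_2$ retain full demand $b(n-1)$ in $f$, so $w(f)$ only loses weight at $v$ itself (where $f(v)<b(n-1)$), and that loss is controlled by $w(v)<3\varepsilon w(V(G))/n$ from Lemma~\ref{lemma-witwt}. The arithmetic then closes exactly because $9\varepsilon^2=\varepsilon$ at $\varepsilon=1/9$. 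Your added edge $u_1u_2$ buys nothing and creates your Case~2, which is itself not sound as sketched: a separating $4$-cycle is not a cut vertex, and one cannot in general make two independent $(f,N)$-colorings agree on all four vertices of a cycle merely by permuting colors, so the ``following the pattern of Lemma~\ref{lemma-mindeg2}'' step would need a genuinely new argument.
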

\begin{proof}
Let $n$ be the number of vertices of $G$ and let $w\colon V(G)\to \R^+$ be a
witness for $G$.  By Lemma~\ref{lemma-mindeg2}, the graph $G$ has minimum degree at
least two.  Suppose that $v\in V(G)$ has degree two.  By
Lemma~\ref{lemma-witwt}, we have $w(v)<3\varepsilon w(V(G))/n$.

Since $G$ is a minimal counterexample, there exists $N\ge 1$ and a tight
$(c_{b(n-1)}, N)$-coloring $\psi$ of $G-v$.  Let $f(x)=b(n-1)$ for $x\in
V(G-v)$ and $f(v)=1-2b(n-1)$.  Clearly, $\psi$ extends to an $(f,N)$-coloring
of $G$.  By Lemma~\ref{lemma-eqw}, there exists an independent set $X\subseteq
V(G)$ such that
\begin{align*}
w(X)&\ge w(f)\\
&= b(n-1)w(V(G))-(3b(n-1)-1)w(v)\\
&> b(n-1)w(V(G))-\frac{3(3b(n-1)-1)\varepsilon}{n} w(V(G))\\
&= \left[b(n-1)-\frac{9\varepsilon^2}{n(n-1)}\right]w(V(G))\\
&= \left[b(n)+\frac{\varepsilon}{n(n-1)}-\frac{9\varepsilon^2}{n(n-1)}\right]w(V(G))\\
&= b(n)w(V(G))=w(c_{b(n)}).
\end{align*}
This contradicts that $w$ is a witness for $G$.
\end{proof}

\begin{lemma}\label{lemma-nosafe}
No minimal counterexample contains a safe $5$-face.
\end{lemma}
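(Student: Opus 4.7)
The plan is a reducibility argument. Suppose for contradiction that a minimal counterexample $G$ on $n$ vertices contains a safe $5$-face $f=v_1v_2v_3v_4v_5$ with external neighbors $x_1,x_2,x_3,x_4$ of $v_1,v_2,v_3,v_4$ respectively, and let $w$ be a witness for $G$. Since each $v_i$ has degree exactly three by the definition of a safe face, Lemma~\ref{lemma-witwt} gives $w(v_i)<3\varepsilon w(V(G))/n$ for $i\in\{1,2,3,4\}$. Note also that $x_1v_5\notin E(G)$, since otherwise $v_1x_1v_5v_1$ would be a triangle.

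I would form a smaller auxiliary graph $G'$ by deleting $v_1,v_2,v_3,v_4$ from $G$ and then identifying $x_2$ with $v_5$ into a new vertex $y$, and $x_3$ with $x_4$ into a new vertex $z$. The safe-face conditions---distance at least four between $x_2$ and $v_5$ in $G-\{v_1,\dots,v_4\}$, no path of length three between $x_3$ and $x_4$ there, and pairwise distinctness and non-adjacency of $x_1,\dots,x_4$---together with $x_1v_5\notin E(G)$ guarantee that $G'$ is a plane triangle-free graph on $n-6$ vertices. Transfer the weights of the deleted vertices onto $y$, $z$, and $x_1$ by setting $w'(y)=w(v_5)+w(x_2)+\alpha_y$, $w'(z)=w(x_3)+w(x_4)+\alpha_z$, $w'(x_1)=w(x_1)+\alpha_x$, and $w'(u)=w(u)$ for the remaining vertices of $G'$, where $\alpha_y,\alpha_z,\alpha_x\ge 0$ sum to $w(v_1)+w(v_2)+w(v_3)+w(v_4)$ and are to be specified below.

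By the minimality of $G$ and Lemma~\ref{lemma-eqw}, the graph $G'$ admits an independent set $X'$ with $w'(X')\ge b(n-6)\cdot w'(V(G'))$. Lift $X'$ to $X_0\subseteq V(G)\setminus\{v_1,v_2,v_3,v_4\}$ by expanding $y$ into $\{v_5,x_2\}$ and $z$ into $\{x_3,x_4\}$ whenever they belong to $X'$, and then extend $X_0$ to an independent set $X$ of $G$ by adding the heaviest independent subset $T$ of the path $v_1v_2v_3v_4$ whose members have no external neighbor in $X_0$. A case analysis on the triple $(p,q,r)=([y\in X'],[z\in X'],[x_1\in X'])$ determines $T$ precisely: for instance, $T=\emptyset$ when $p=q=1$; $T=\{v_3\}$ when $(p,q)=(1,0)$; and $T$ is a heaviest two-element independent set of the path when $p=q=0$. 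In every case, combining $w'(X')\ge b(n-6)\,w'(V(G'))$ with the identity $w(X)=w'(X')-p\alpha_y-q\alpha_z-r\alpha_x+w(T)$ should yield $w(X)\ge b(n)\cdot w(V(G))$, contradicting that $w$ is a witness.

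The hard part is choosing the redistribution coefficients $\alpha_y,\alpha_z,\alpha_x$ as linear functions of $w(v_1),\dots,w(v_4)$ so that the target inequality holds uniformly across all eight cases of $(p,q,r)$. The binding configurations are $p=q=1$, where $T=\emptyset$ and so the full weight $w(v_1)+\dots+w(v_4)$ must already be present inside $X'$ via the redistribution, and $(p,q,r)=(0,0,1)$, where only $\{v_2,v_4\}$ can be added, so $w(v_1)+w(v_3)$ is effectively lost; these pull the parameters in opposite directions. The resulting finite-dimensional linear-feasibility system is nevertheless solvable thanks to the slack $b(n-6)-b(n)=6\varepsilon/(n(n-6))$ combined with the degree-three bound $w(v_i)<3\varepsilon w(V(G))/n$ from Lemma~\ref{lemma-witwt}; the choice $\varepsilon=1/9$ is exactly what is needed to make these inequalities simultaneously hold.
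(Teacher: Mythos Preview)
Your reduction to the auxiliary graph $G'$ (delete $v_1,\ldots,v_4$, identify $x_2$ with $v_5$ and $x_3$ with $x_4$) agrees with the paper's, but the weight-redistribution step has a genuine gap: the linear feasibility system you assert to be solvable is in fact infeasible. Write $W=w(v_1)+w(v_2)+w(v_3)+w(v_4)$ and $S=\bigl(b(n-6)-b(n)\bigr)w(V(G))=\frac{6\varepsilon}{n(n-6)}\,w(V(G))$. In the case $(p,q,r)=(1,1,1)$ you have $T=\emptyset$, and your target inequality reduces to $\alpha_y+\alpha_z+\alpha_x\le S$, i.e.\ $W\le S$; but Lemma~\ref{lemma-witwt} only gives $W<\frac{12\varepsilon}{n}w(V(G))$, which is an order of $n$ larger than $S$. (This case can occur: $\{y,z,x_1\}$ is independent in $G'$ whenever $v_5x_3\notin E(G)$, which the safe-face definition does not exclude.) Even setting that case aside, the constraints from $(1,1,0)$ and $(0,0,1)$ alone force $\alpha_x\ge W-S$ and $\alpha_x\le w(v_2)+w(v_4)+S$, hence $w(v_1)+w(v_3)\le 2S$, which again fails in general. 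So a single heavy independent set of $G'$ with respect to any redistributed weight $w'$ cannot be lifted to one of $G$ in the way you describe.

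The paper circumvents this by using the full fractional coloring of $G'$ rather than one independent set. It takes a tight $(c_{b(n-6)},N)$-coloring $\psi$ of $G'$ and \emph{extends} it to an $(f,N)$-coloring of $G$, where $f(v_i)=1-2b(n-6)$ on the four deleted vertices and $f=b(n-6)$ elsewhere. The identifications are essential here: because $\psi(x_2)=\psi(v_5)$ and $\psi(x_3)=\psi(x_4)$, the admissible color sets $M_3=[N]\setminus(\psi(v_2)\cup\psi(x_3))$ and $M_4=[N]\setminus(\psi(v_5)\cup\psi(x_4))$ satisfy $|M_3\cup M_4|\ge (f(v_3)+f(v_4))N$, so disjoint colors for $v_3,v_4$ (and then $v_1,v_2$) can be chosen. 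Only then does Lemma~\ref{lemma-eqw} yield an independent set $X$ with $w(X)\ge w(f)=b(n-6)w(V(G))-(3b(n-6)-1)\sum_i w(v_i)$, and the final inequality $\frac{6\varepsilon}{n(n-6)}\ge \frac{36\varepsilon^2}{n(n-6)}$ (i.e.\ $\varepsilon\le 1/6$) closes the argument. The key difference is that extending a fractional coloring averages over all the $(p,q,r)$-configurations simultaneously, which is precisely what your single-set lifting cannot do.
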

\begin{proof}
Let $G$ be a minimal counterexample containing a safe $5$-face
$f=v_1v_2v_3v_4v_5$, and let $x_1$, \dots, $x_4$ be the neighbors of $v_1$,
\ldots, $v_4$, respectively, that are not incident with $f$.  Let $n$ be the
number of vertices of $G$ and let $w\colon V(G)\to \R^+$ be a witness for $G$.
By Lemma~\ref{lemma-witwt}, we have $w(v_i)<3\varepsilon w(V(G))/n$ for $1\le
i\le 4$.

Let $G'$ be the graph obtained from $G-\{v_1,v_2,v_3,v_4\}$ by identifying
$x_2$ with $v_5$ into a new vertex $u_1$, and $x_3$ with $x_4$ into a new vertex $u_2$.
Since $f$ is safe, $G'$ is
triangle-free.  Since $G$ is a minimal counterexample, there exists $N\ge 1$
and a tight $(c_{b(n-6)}, N)$-coloring $\psi$ of $G'$.  Let $f(x)=b(n-6)$ for
$x\in V(G-\{v_1,v_2,v_3,v_4\})$ and $f(v_i)=1-2b(n-6)$ for $1\le i\le 4$.
We use $\psi$ to design an $(f,N)$-coloring of $G$.

Let $\psi(x_2)=\psi(v_5)=\psi(u_1)$ and $\psi(x_3)=\psi(x_4)=\psi(u_2)$.
Let $\psi(v_1)$ be a subset of $[N]\setminus (\psi(x_1)\cup \psi(v_5))$ of size
$f(v_1)N$, and let $\psi(v_2)$ be a subset of $[N]\setminus (\psi(x_2)\cup
\psi(v_1))$ of size $f(v_2)N$.  Let $M_3=[N]\setminus (\psi(v_2)\cup
\psi(x_3))$ and $M_4=[N]\setminus (\psi(v_5)\cup \psi(x_4))$.  Note that
$|M_3|\ge f(v_3)N$ and $|M_4|\ge f(v_4)N$.  Furthermore, since
$\psi(x_3)=\psi(x_4)$ and $\psi(v_2)\cap \psi(v_5)=\emptyset$, we have
$|M_3\cup M_4|=1-|\psi(x_3)|=1-b(n-6)\ge f(v_3)+f(v_4)$.  Therefore, we can
choose disjoint sets $\psi(v_3)\subseteq M_3$ and $\psi(v_4)\subseteq M_4$ of
size $f(v_3)N=f(v_4)N$.  This gives an $(f,N)$-coloring of $G$.

By Lemma~\ref{lemma-eqw}, there exists an independent set $X\subseteq V(G)$ such that
\begin{align*}
w(X)&\ge w(f)\\
&= b(n-6)w(V(G))-(3b(n-6)-1)\sum_{i=1}^4w(v_i)\\
&> b(n-6)w(V(G))-\frac{12(3b(n-6)-1)\varepsilon}{n} w(V(G))\\
&= \left[b(n-6)-\frac{36\varepsilon^2}{n(n-6)}\right]w(V(G))\\
&= \left[b(n)+\frac{6\varepsilon}{n(n-6)}-\frac{36\varepsilon^2}{n(n-6)}\right]w(V(G))\\
&\ge b(n)w(V(G))=w(c_{b(n)}).
\end{align*}
This contradicts that $w$ is a witness for $G$.
\end{proof}

We can now establish Theorem~\ref{thm-main}.
\begin{proof}[Proof of Theorem~\ref{thm-main}]
Note that $\frac{1}{3-\frac{1}{n+1/3}}=b(n)$.  Suppose that there exists a
planar triangle-free graph $G$ on $n$ vertices with fractional chromatic
number greater than $3-\frac{1}{n+1/3}$.  Then $G$ has no $c_{b(n)}$-coloring,
and thus $G$ is a counterexample.  Therefore, there exists a minimal
counterexample $G_0$. Lemmas~\ref{lemma-mindeg3}, \ref{lemma-faces} and
\ref{lemma-safe} imply that $G_0$ has a safe $5$-face.  However, that contradicts
Lemma~\ref{lemma-nosafe}.
\end{proof}

\end{document}